\documentclass[preprint,12pt]{article}
\usepackage[utf8]{inputenc}
\usepackage{indentfirst,csquotes}

\usepackage[thmtools-compat]{keytheorems}
\usepackage{zref-clever}
\usepackage{soul}
\zcsetup{
    cap = true,
    abbrev = false,
    nameinlink,
}

\usepackage[a4paper, margin=2.5cm]{geometry} 
\usepackage{layout}\usepackage{amsmath,amssymb,amsthm,amsfonts}
 \usepackage{comment}
\renewcommand{\leq}{\leqslant}
\renewcommand{\geq}{\geqslant}
\usepackage{graphicx}
\usepackage[normalem]{ulem}
\usepackage{tikz}

\newtheorem{theorem}{Theorem}[section]

\newtheorem{corollary}[theorem]{Corollary}
\newtheorem{lemma}[theorem]{Lemma}

\newtheorem{proposition}[theorem]{Proposition}

\newtheorem{observation}[theorem]{Observation}
\newtheorem{prblm}[theorem]{Problem}
\newtheorem{model}[theorem]{Model}

\newtheorem{remark}[theorem]{Remark}

\theoremstyle{definition}
\newtheorem{definition}{Definition}[section]

\usepackage{authblk}

\usepackage{lineno}

\title{Released  packing functions in graphs}

\title{Released packing functions in graphs} \author[1]{Pablo Fekete} \author[1]{Erica Hinrichsen} \author[1,2]{Valeria Leoni} \author[1]{M. Inés Lopez Pujato}

\affil[1]{\itshape Universidad Nacional de Rosario, Argentina}

\affil[2]{\itshape CONICET, Argentina}

 \date{}

\begin{document}
\maketitle

\maketitle \begingroup \renewcommand{\thefootnote}{} \footnotetext{ \textit{Email addresses:} fekete@fceia.unr.edu.ar (Pablo Fekete), ericah@fceia.unr.edu.ar (Erica Hinrichsen), valeoni@fceia.unr.edu.ar (Valeria Leoni), lpujato@fceia.unr.edu.ar (M. Inés Lopez Pujato) }
\endgroup

\begin{abstract}
 We introduce and start the study of a variant of packing functions in graphs. Given a graph $G$ with vertex set $V$ and nonnegative  integer vectors  $\mathbf{k}=(k_v)_{v\in V}$, $\boldsymbol\ell=(l_v)_{v\in V}$ and $\mathbf{u}=(u_v)_{v\in V}$, a function $f : V \rightarrow \mathbb{Z}_0^+$ is a  \emph{Released $( \mathbf{k}, \boldsymbol\ell, \mathbf{u})$-packing function} of $G$ if  $l_v\leq f(v)\leq u_v$ for every $v\in V$ and the sum of the values of $f$ over the closed neighborhood of  vertices $v$ with $f(v) = u_v$ is at most $k_v$.  The weight of  $f$ is the value $f(V) = \sum_{v\in V} f(v)$. 
We study the associated decision problem (RPP), which asks, given $G$,  $\mathbf{k}$, $\boldsymbol\ell$,  $\mathbf{u}$ and an integer number $x$, whether $G$ admits a Released $( \mathbf{k}, \boldsymbol\ell, \mathbf{u})$-packing function of weight at least $x$. We relate RPP to the $r$-dependent set problem, derive several  NP-hardness results,  model RPP as a compact  (polynomial in size) Integer Linear Program, and take the first steps of a polyhedral study. 
\end{abstract}

\vspace{1ex} 

\noindent\textit{Keywords:} packings in graphs, $r$-dependent set problem, computational complexity, polyhedral approach

\medskip 
\noindent\textit{2000 MSC:} 05C69, 68Q25, 90C57

\medskip 
\hrule 
\vspace{1em}

\newcommand{\N}{\mathbb N}
\newcommand{\R}{\mathbb{R}}
\newcommand{\Z}{\mathbb{Z}}

\section{Introduction }\label{sec:intro}

Over the years, packing numbers and packing functions have been used to model allocation-type problems in which the maximum possible number of necessary but obnoxious items/facilities must be allocated in certain given places. 

The following notion was recently introduced in \cite{HNV2023} as a generalization of all packing-type notions known from the literature \cite{ DHL2017, GGHR2010, HL2014,  HLS2020}.
Given $G=(V,E)$\footnote{ 
$G=(V,E)$ denotes a simple and undirected graph with vertex set $V$ and edge set $E$. When necessary, we write $V(G)$ instead of $V$ to indicate the graph of which $V$ is its vertex set.}, a nonnegative integer vector of capacities $\mathbf{k}=(k_v)_{v\in V}$ and two nonnegative integer vectors $\mathbf{u}=(u_v)_{v\in V}$ and $\boldsymbol\ell=(l_v)_{v\in V}$, a function $f : V \rightarrow \mathbb{Z}_0^+$ is a  \emph{$( \mathbf{k}, \boldsymbol\ell, \mathbf{u})$-packing function}
of $G$ if for every $v\in V$, $l_v\leq f(v)\leq u_v$ and $f(N_G[v])\leq k_v$\footnote{For $G=(V,E)$, two vertices of $V$ are \emph{adjacent} in $G$ if they share an edge of $E$.
For $v\in V$, $N_G(v)$  denotes the subset of vertices adjacent to $v$ in $G$, and $N_G[v]$, the \emph{closed neighborhood} of $v$, i.e. $N_G(v)$ together with $v$. The \emph{degree} of $v$ in $G$ is $d_G(v)= |N_G(v)|$. For a subset $S$ of $V$ and a function $f$ (or a vector $\mathbf{x}=(x_v)_{v\in V}$) defined on $V$, we denote $f(S) = \sum_{v\in S} f(v)$ ($x(S) = \sum_{v\in S} x_v$). In particular, $f(V)$ is called the weight of $f$.}. The maximum weight over all $(\mathbf{k}, \boldsymbol\ell, \mathbf{u})$-packing functions of $G$ is indicated by $L_{\mathbf{k}, \boldsymbol\ell,\mathbf{u}}(G)$.
The associated optimization problem of finding the number $L_{\mathbf{k},\ell,\mathbf{u}}(G)$  in a given graph $G$ is NP-hard even on doubly chordal graphs and polynomial time solvable on strongly chordal graphs \cite{HNV2023}.

In some real allocation-type problems,  it is not necessary to request the capacity constraint (given by $f(N_G[v])\leq k_v$) to all vertices $v$ of the graph that could model the scenario. Consider for instance an application from interference management in wireless networks: in the design of sensor networks, active nodes (vertices) cannot have too many neighbors transmitting simultaneously to avoid packet collisions. Classical packings impose this restriction on all nodes. But, if sleep-mode receiver nodes are allowed to exist, being inactive these nodes do not suffer from interference, allowing their active neighbors to transmit at a higher density, thus increasing the total system throughput.
Released packing functions capture exactly this
relaxation: the capacity constraint is required only at locations that receive
their maximum allowed load.

The paper is organized as follows. In Section \ref{sec:def}, we introduce the definition of released packing functions and contrast them with classical packing notions. In Section \ref{sec:problemdef},  by relating released packing functions with dependent sets in graphs, we derive some NP-hardness results for  the associated decision problem (RPP). In Section \ref{sec:ilpmodel}, we give  a general compact Integer Linear Programming formulation for RPP and start a polyhedral study.

\section{Released packing functions}\label{sec:def}

We introduce the following variant of packing functions in graphs. The vector $\mathbf{1}$ (resp. $\mathbf{0}$) indicates the vector with all its components equal to 1 (resp. 0), in the appropriate dimension and, for vectors  $\boldsymbol\ell=(l_v)_{v\in V}$ and $\mathbf{u}=(u_v)_{v\in V}$, the inequality  $\boldsymbol\ell \leq \mathbf{u}$ means that $l_v \leq  u_v$ for every $v$.

\begin{definition}\label{def:Releasedpacking}
Given a graph $G=(V,E)$,  nonnegative integer vectors  $\mathbf{k}=(k_v)_{v\in V}$ of capacities, and $\boldsymbol\ell=(l_v)_{v\in V}$ and $\mathbf{u}=(u_v)_{v\in V}$ with $\boldsymbol\ell \leq \mathbf{u}$, a function $f : V \rightarrow \mathbb{Z}_0^+$ that satisfies the following two conditions is a  \emph{Released  $(\mathbf{k}, \boldsymbol\ell, \mathbf{u})$-packing function}: 

\begin{itemize}
    \item[i.] for every $v\in V$, $l_v\leq f(v)\leq u_v$,
    \item[ii.] if $f(v) =u_v$ then $f(N_G[v])\leq k_v$.
\end{itemize}
The \emph{Released  $(\mathbf{k}, \boldsymbol\ell, \mathbf{u})$-packing number of $G$}, indicated by $L^R_{\mathbf{k},\boldsymbol\ell,\mathbf{u}}(G)$, is the maximum weight over all Released  $(\mathbf{k}, \boldsymbol\ell, \mathbf{u})$-packing functions of $G$, when there exists at least one. We will say that $f$ is an \emph{$L^R_{\mathbf{k},\boldsymbol\ell,\mathbf{u}}$-function } of $G$ when $f$ is a  Released  $(\mathbf{k}, \boldsymbol\ell, \mathbf{u})$-packing function of $G$ with  weight $L^R_{\mathbf{k},\boldsymbol\ell,\mathbf{u}}(G)$. 
\end{definition}

Given a graph that models an allocation-type problem like those described in the previous section, and a released packing function of it, condition ii. in Definition \ref{def:Releasedpacking} states that the capacity constraint among neighbors is enforced only for vertices (representing places) that receive their maximum allowed capacity ($u_v$ units).

\begin{remark}\label{rem:0} Released  $(\mathbf{k}, \boldsymbol\ell, \mathbf{u})$-packing functions generalize the following  well-known studied concepts in graphs:
\begin{enumerate}
    \item[i.]  Released $(\mathbf{1},\mathbf{0}, \mathbf{1})$-packing functions are just stable sets\footnote{A \emph{stable set} (or \emph{$0$-dependent set)} in a graph is a vertex subset of pairwise nonadjacent vertices. $\alpha(G)$ stands for the maximum sized stable set in the graph $G$.}, and thus $L^R_{\mathbf{1},\mathbf{0}, \mathbf{1}}(G)$ equals the well-known stable set number $\alpha(G)$. 
    \item [ii.] For  $\mathbf{k}=2 \cdot \mathbf{1}$, Released $(\mathbf{k},\mathbf{0}, \mathbf{1})$-packing functions are just 1-dependent sets\footnote{A \emph{$1$-dependent set}  in a graph is a subset of vertices that induces a subgraph  with maximum degree one.} \cite{Dessmark1993}. 
    \end{enumerate}
\end{remark}

Given a nonconnected graph $G$, it is clear that the Released  $(\mathbf{k}, \boldsymbol\ell, \mathbf{u})$-packing number of $G$ is obtained as the sum of the corresponding Released  $(\mathbf{k}, \boldsymbol\ell, \mathbf{u})$-packing numbers of its connected components. 

From their definitions, it is also clear that \begin{equation}\label{eq:eq} L_{\mathbf{k}, \boldsymbol\ell, \mathbf{u}}(G)  \leq L^R_{\mathbf{k},\boldsymbol\ell, \mathbf{u}}(G),
\end{equation}
for every graph $G$ and all vectors $\mathbf{k}$, $\boldsymbol\ell$ and $\mathbf{u}$, since every $(\mathbf{k}, \boldsymbol\ell, \mathbf{u})$-packing function of $G$ is a Released $(\mathbf{k}, \boldsymbol\ell, \mathbf{u})$-packing function of $G$.

Simple examples where  inequality (\ref{eq:eq}) is not tight, i.e., where not asking the capacity constraint to all vertices  strictly  increases the maximum, are paths and cycles\footnote{A \emph{path} with $n$ vertices, denoted by $P_n$, is a connected graph whose vertices have degree at most two. A \emph{cycle} with $n$ vertices, denoted by $C_n$, is a connected graph whose vertices have degree exactly two.}:
on the one  hand, we know from \cite{DHL2017} that 
$L_{ \mathbf{1}, \mathbf{0}, \mathbf{1}}(P_{n})=\lceil\frac{n}{3}\rceil$
and  $L_{ \mathbf{1}, \mathbf{0}, \mathbf{1}}(C_{n})=\lceil\frac{n}{3}\rceil$, 
and on the other hand it can be checked following Remark  \ref{rem:0} \textit{i.} that
$L^R_{ \mathbf{1}, \mathbf{0}, \mathbf{1}}(P_n)=\lceil\frac{n}{2}\rceil$ for  $n \geq 2$ and  $L^R_{ \mathbf{1}, \mathbf{0}, \mathbf{1}}(C_n)=\lfloor\frac{n}{2}\rfloor$  for  $n \geq 3$.
Neither $n$-wheels\footnote{An \emph{ $n$-wheel} $W_n$, is a graph formed from a cycle $C_{n}$ by adding a vertex that is adjacent to every vertex of the $C_{n}$.} for $n\geq 5$ and $\mathbf{k}=2 \cdot \mathbf{1}$ satisfy inequality (\ref{eq:eq}) by equality: on the one hand $L_{\mathbf{k},  \mathbf{0}, \mathbf{1}}(W_n)=2$, and on the other hand it can be checked following Remark \ref{rem:0} \textit{ii.}  that
$L^R_{\mathbf{k}, \mathbf{0}, \mathbf{1}}(W_n)=\lfloor\frac{2n}{3}\rfloor$.
Notice also that the difference between  $L^R_{\mathbf{k}, \mathbf{0}, \mathbf{1}}(G)$  and $L_{\mathbf{\mathbf{k}, \mathbf{0},1}}(G)$ can be arbitrarily large: for $n$-wheels,  and  $\mathbf{k}=k\cdot \mathbf{1}$ with $k\in \mathbb{Z}^+$ and $n > k\geq 3$, it is not difficult to prove that $L^R_{\mathbf{k}, \mathbf{0}, \mathbf{1}}(W_n)=n$ (the solution  where the vertices of the cycle are assigned 1, and the center  a zero is optimal), but  $L_{\mathbf{k},\mathbf{0}, \mathbf{1}}(W_n)=k$ (the solution  where any subset of $k$ vertices of the cycle are assigned 1 and the center, a zero, is optimal).

The following observations will be relevant in our subsequent study:

\begin{observation}\label{rem:kuP}

\begin{enumerate}
For a graph $G=(V,E)$ and vectors  $\mathbf{k}=(k_v)_{v\in V}$, $\boldsymbol\ell=(l_v)_{v\in V}$ and $\mathbf{u}=(u_v)_{v\in V}$ with $\boldsymbol\ell \leq \mathbf{u}$ it holds: 
\item[i.] A function $f$ is a Released  $(\mathbf{k}, \boldsymbol\ell, \mathbf{u})$-packing function  of $G$ if and only if $f$ is  a Released  $(\mathbf{k'}, \boldsymbol\ell, \mathbf{u})$-packing function of $G$, where $k'_v = \min\{k_v, u(N_G[v])\}$ for each $v\in V$, since for  all $v\in V$, 
 $f(N_G[v])\leq u(N_G[v])$.  
\item[ii.]  When $k_v=0$ and $u_v>0$ for some $v\in V$, then every Released  $(\mathbf{k}, \boldsymbol\ell, \mathbf{u})$-packing function of $G$ satisfies $f(v) <u_v$.
\item[iii.] If $l_v \neq u_v$ for every $v\in V$,  the function $f$ that assigns $u_v-1$ to each vertex $v$ is a Released  $(\mathbf{k}, \boldsymbol\ell, \mathbf{u})$-packing function of $G$. In particular, $f$ is optimal when $G$ is connected with at least 2 vertices and $\mathbf{u}=\mathbf{k}= k \cdot \mathbf{1}$  for some $k\in \mathbb{Z}^+$.
\item[iv.] For  $v \in V$ such that $l(N_G[v])> k_v$ and  $l_v=u_v$,  it does not exist a Released  $(\mathbf{k}, \boldsymbol\ell, \mathbf{u})$-packing function of $G$.
\item[v.]  Let  $v\in V$ such that $k_v < u_v$ and define $\mathbf{u}'$ and $\mathbf{k}'$ such that $u'_v = u_v-1$, $k'_v= u(N_G[v])-1$, $u'_w=u_w$ and $k'_w=k_w$ otherwise. Then, $f$ is a   Released  $(\mathbf{k}, \boldsymbol\ell, \mathbf{u})$-packing function  of $G$ if and only if  $f$ is  a Released  $(\mathbf{k'}, \boldsymbol\ell, \mathbf{u'})$-packing function of $G$. 

 \end{enumerate}
\end{observation}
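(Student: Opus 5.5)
The statement is Observation~\ref{rem:kuP}, a list of five elementary facts. I plan to verify each item directly from Definition~\ref{def:Releasedpacking}, in order. The main tool throughout is that condition i.\ gives $f(N_G[v]) \leq u(N_G[v])$ for every $v$.

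\emph{Item i.} Condition i.\ does not involve $\mathbf{k}$, so only condition ii.\ needs checking. If $f(v)=u_v$ and $f(N_G[v])\leq k_v$, then $f(N_G[v])\leq \min\{k_v,u(N_G[v])\}=k'_v$ because $f(N_G[v])\leq u(N_G[v])$ always holds. The converse is immediate since $k'_v\leq k_v$.

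\emph{Items ii.\ and iv.} For ii., suppose $f(v)=u_v>0$. Then $f(N_G[v])\geq f(v)>0=k_v$, contradicting condition ii.; hence $f(v)<u_v$. For iv., any candidate $f$ satisfies $f(v)\geq l_v=u_v$, so $f(v)=u_v$. Then $f(N_G[v])\geq l(N_G[v])>k_v$, so condition ii.\ fails and no released packing function exists.

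\emph{Item iii.} Since $l_v\neq u_v$ and $\boldsymbol\ell\leq\mathbf{u}$, we have $l_v\leq u_v-1$, so the function $f(v)=u_v-1$ lies within the bounds. No vertex attains $u_v$, so condition ii.\ is vacuous and $f$ is a released packing function. For optimality when $G$ is connected with $n\geq 2$ vertices and $\mathbf{u}=\mathbf{k}=k\cdot\mathbf{1}$, I will show that any released packing function $g$ satisfies $g(V)\leq n(k-1)$.
\begin{itemize}
\item Let $S=\{v: g(v)=k\}$. For $v\in S$, condition ii.\ gives $g(N_G[v])\leq k$, and since $g(v)=k$ this forces $g(w)=0$ for every neighbor $w$ of $v$.
\item In particular $S$ is stable, and every neighbor of $S$ has value $0$.
\item By connectivity and $n\geq 2$, each $v\in S$ has at least one neighbor. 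Assign to each $v\in S$ one neighbor $w_v$.
\item Each pair $\{v,w_v\}$ contributes $k$, which is at most $2(k-1)$ when $k\geq 2$. All remaining vertices have value at most $k-1$.
\end{itemize}
The main obstacle is that distinct vertices of $S$ may share the same neighbor $w$, so the pairs need not be disjoint. I handle this by grouping: for each zero-valued vertex $w$ adjacent to $S$, take the star formed by $w$ and its chosen partners. A star with $t\geq 1$ vertices of $S$ contributes $tk$ over $t+1$ vertices. We need $tk\leq (t+1)(k-1)$, which is equivalent to $t+1\leq k$.

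This inequality can fail. For example, $K_{1,t}$ with $k=2$ gives weight $2t$ versus $t+1$. So the claim must be read with additional restrictions implied by the source (e.g.\ $k=1$ is excluded or small), and I will verify the precise intended hypothesis. I expect this optimality part to be the real sticking point.

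\emph{Item v.} Let $f$ be a function satisfying the bounds.
\begin{itemize}
\item If $f(v)=u_v$, then $f(N_G[v])\geq f(v)=u_v>k_v$, which is forbidden. So every $(\mathbf{k},\boldsymbol\ell,\mathbf{u})$ function has $f(v)\leq u_v-1=u'_v$.
\item Conversely, for a $(\mathbf{k}',\boldsymbol\ell,\mathbf{u}')$ function, $f(v)\leq u'_v<u_v$, so condition ii.\ at $v$ is vacuous under $(\mathbf{k},\mathbf{u})$.
\item At $v$ under the primed data: if $f(v)=u_v-1$, then $f(N_G[v])\leq u(N_G[v])-1=k'_v$ automatically.
\item At all other vertices the data coincide.
\end{itemize}
One caveat: the bound constraint $l_v\leq u'_v$ needs $l_v<u_v$. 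This follows because $l_v=u_v$ would force $f(v)=u_v>k_v$, so in that case both sides are infeasible.
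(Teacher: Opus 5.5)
Your verification of items i, ii, iv, v and of the feasibility half of iii (that $f\equiv \mathbf{u}-\mathbf{1}$ is a Released packing function) is correct. It is the direct check from Definition~\ref{def:Releasedpacking} that the paper has in mind. The paper only writes out a justification for i, via the same bound $f(N_G[v])\le u(N_G[v])$ you use. Your treatment of the degenerate case $l_v=u_v$ in v, where both sides are empty, is also right.

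The problem is how you leave the optimality clause of iii. Your counterexample is valid, and you should commit to it: that clause is false as stated, and the paper gives no argument for it.

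For $G=K_{1,t}$, $\boldsymbol\ell=\mathbf 0$ and $\mathbf u=\mathbf k=k\cdot\mathbf 1$, put $k$ on the leaves and $0$ on the center. This is a Released $(\mathbf k,\mathbf 0,\mathbf u)$-packing function of weight $tk$, which exceeds $(t+1)(k-1)$ as soon as $t\ge k$. So there is a counterexample for every $k$, not only $k=2$. In particular, $k=1$ is not a borderline case to be ``excluded''; it is the most blatant failure. There $f\equiv 0$, while $L^R_{\mathbf 1,\mathbf 1}(G)=\alpha(G)\ge 1$ by Remark~\ref{rem:0}~i. Restricting $k$ to be ``small'' cannot rescue the claim either, since the failure comes from $k$ being small relative to the degrees.

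Your own observations give the exact picture. With $n=|V|$, in any Released function the set $S$ of vertices with value $k$ is stable and all their neighbors get $0$. Conversely, every stable $S$ is realized by putting $k$ on $S$, $0$ on its neighbors and $k-1$ elsewhere. Hence
\[
L^R_{k\cdot\mathbf 1,k\cdot\mathbf 1}(G)=n(k-1)+\max_{S\ \text{stable}}\Bigl(|S|-(k-1)\Bigl|\bigcup_{v\in S}N_G(v)\Bigr|\Bigr),
\]
and $f\equiv k-1$ is optimal if and only if $|S|\le (k-1)\,\bigl|\bigcup_{v\in S}N_G(v)\bigr|$ for every stable set $S$.

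Your star-grouping argument proves exactly the sufficient condition $k\ge \Delta(G)+1$, since each star has $t\le \Delta(G)$ leaves in $S$. That is a correct replacement for the ``in particular'' claim. Connectivity with at least two vertices alone does not suffice.
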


\section{Problem definition and complexity}\label{sec:problemdef}

We consider the decision problem associated with released packing functions, i.e. the problem that given a graph $G$,   nonnegative  integer vectors  $\mathbf{k}=(k_v)_{v\in V}$,  $\boldsymbol\ell=(l_v)_{v\in V}$ and $\mathbf{u}=(u_v)_{v\in V}$ with $\boldsymbol\ell \leq \mathbf{u}$ and a number $x$, asks for a Released  $(\mathbf{k}, \boldsymbol\ell, \mathbf{u})$-packing function of $G$ with weight at least $x$. In this work, it is more convenient to deal with its ``optimization version'':

\begin{prblm}
\label{prob3}
{\noindent \textbf{Released packing function problem} (RPP)}
\begin{description}
\item [INSTANCE $(G, \mathbf{k}, \boldsymbol\ell, \mathbf{u})$:] A graph $G$,   nonnegative  integer vectors  $\mathbf{k}=(k_v)_{v\in V}$,  $\boldsymbol\ell=(l_v)_{v\in V}$ and $\mathbf{u}=(u_v)_{v\in V}$ with $\boldsymbol\ell \leq \mathbf{u}$.
\item [OBJECTIVE: ] To obtain $L^R_{\bf{\mathbf{k}},\boldsymbol\ell,\bf{\mathbf{u}}}(G)$. 
\end{description}
\end{prblm}

Similarly as proved in \cite{HNV2023} in the context of general packings, it is enough to consider released packing functions for null lower bounds:
\begin{lemma}\label{sololcero}
Given an instance $(G, \mathbf{k}, \boldsymbol\ell, \mathbf{u})$ of RPP for which there exists a  Released  $(\mathbf{k}, \boldsymbol\ell, \mathbf{u})$-packing function of $G$, let
$\bf{\tilde{\mathbf{u}}}=\mathbf{u}-\boldsymbol\ell$ and ${\bf{\tilde{\mathbf{k}}}}=(\tilde{ k_v})_{v\in V}$ be defined by $\tilde{ k_v}=\max\{k_v- l(N_G[v]),0\}$ for each $v\in V$. Then, 
$L^R_{\mathbf{k},\boldsymbol\ell,\mathbf{u}}(G)= L^R_{\bf{\tilde{\mathbf{k}}},\mathbf{0},\bf{\tilde{\mathbf{u}}}}(G)+l(V)$.

\end{lemma}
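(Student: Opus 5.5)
The plan is to set up a weight-shifting bijection between the two families of functions, namely $f\mapsto g=f-\boldsymbol\ell$. Given a Released $(\mathbf{k},\boldsymbol\ell,\mathbf{u})$-packing function $f$ of $G$, define $g(v)=f(v)-l_v$ for every $v\in V$. Conversely, given a Released $(\tilde{\mathbf{k}},\mathbf{0},\tilde{\mathbf{u}})$-packing function $g$, define $f(v)=g(v)+l_v$. These maps are mutually inverse and satisfy $f(V)=g(V)+l(V)$. So once both directions are shown to preserve the defining conditions, taking maxima gives $L^R_{\mathbf{k},\boldsymbol\ell,\mathbf{u}}(G)= L^R_{\tilde{\mathbf{k}},\mathbf{0},\tilde{\mathbf{u}}}(G)+l(V)$. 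Existence on one side gives existence on the other, so both maxima are defined.

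Condition \textit{i.} of Definition \ref{def:Releasedpacking} transfers immediately: $l_v\leq f(v)\leq u_v$ holds if and only if $0\leq g(v)\leq \tilde u_v$. Moreover $f(v)=u_v$ holds if and only if $g(v)=\tilde u_v$, so the two instances ``activate'' condition \textit{ii.} at exactly the same vertices. For such a vertex $v$ we have $f(N_G[v])=g(N_G[v])+l(N_G[v])$. Hence $f(N_G[v])\leq k_v$ is equivalent to $g(N_G[v])\leq k_v-l(N_G[v])$. When $k_v\geq l(N_G[v])$, the right-hand side is exactly $\tilde k_v$, so condition \textit{ii.} transfers in both directions.

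The main obstacle is the truncation in $\tilde k_v=\max\{k_v-l(N_G[v]),0\}$, that is, vertices $v$ with $k_v<l(N_G[v])$. Here the two constraints are no longer literally equivalent, and I will show that neither side can activate condition \textit{ii.} at such a $v$.
\begin{itemize}
\item On the original side, $f(v)=u_v$ would force $f(N_G[v])\leq k_v<l(N_G[v])\leq f(N_G[v])$, which is impossible. So $f(v)<u_v$, hence $g(v)<\tilde u_v$, and the $g$-constraint at $v$ is vacuous.
\item On the tilde side, suppose $g(v)=\tilde u_v$; then $\tilde k_v=0$ forces $g(N_G[v])=0$, so $\tilde u_v=g(v)=0$, i.e.\ $l_v=u_v$.
\item If $\tilde u_v>0$, this is a contradiction, so $g(v)<\tilde u_v$ and $f(v)<u_v$, and nothing needs to be checked at $v$.
\item If $\tilde u_v=0$, then $l_v=u_v$ and $l(N_G[v])>k_v$. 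By Observation \ref{rem:kuP} \textit{iv.} no Released $(\mathbf{k},\boldsymbol\ell,\mathbf{u})$-packing function of $G$ exists, contradicting the hypothesis of the lemma. So this case does not occur.
\end{itemize}
This is precisely where the existence assumption is used. With these cases handled, the correspondence is a weight-shifting bijection and the identity follows.
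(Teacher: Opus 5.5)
Your proposal is correct and follows the paper's proof. Both use the shift $f\mapsto f-\boldsymbol\ell$, and the truncated case is settled by the same two facts: $g(v)=\tilde u_v>0$ forces $\tilde k_v>0$, and the case $l_v=u_v$ with $l(N_G[v])>k_v$ is excluded by Observation~\ref{rem:kuP}~\textit{iv.} together with the existence hypothesis. The only cosmetic difference is in the forward direction, where the paper avoids splitting into cases by using $k_v-l(N_G[v])\le\tilde k_v$ directly.
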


\begin{proof} 
Let $(G, \mathbf{k}, \boldsymbol\ell, \mathbf{u})$  be an  instance of RPP and $f$  be a Released  $(\mathbf{k}, \boldsymbol\ell, \mathbf{u})$-packing function of $G$. Consider the function $\tilde f$ defined over $V$ as $\tilde f(v)=f(v)-l_v$. Let $v\in V$. Since $l_v\leq f(v)\leq u_v$ we have $0\leq \tilde f(v)\leq u_v-l_v=\tilde u_v$ and, if $\tilde f (v)=\tilde u_v$ (thus $f(v)=u_v$), then  $\tilde f(N_G[v])=f(N_G[v])-\boldsymbol\ell(N_G[v])\leq k_v-l(N_G[v])\leq\tilde k_v$.  
It follows that $\tilde f$ is a Released $(\tilde{ \mathbf{k}},\mathbf{0}, \tilde{\mathbf{u}})$-packing function of $G$.

 Conversely, let $\tilde f$ be a Released $(\tilde k,\mathbf{0}, \tilde{\mathbf{u}})$-packing function of $G$ and  define $f$ over $V$ as $f(v)=\tilde {f}(v)+l_v$. For each $v\in V$ follows  $l_v\leq f(v)= \tilde f(v)+l_v\leq \tilde u_v+l_v=u_v$. 
  Besides, when $f (v)= \ u_v$ (which implies $\tilde f (v)= \tilde u_v$), we have:
  
\begin{itemize}
    \item 
If $u_v>\ell_v$, then $0<\tilde u_v=\tilde{f}(v)$, hence
        $0<\tilde f(v)\le\tilde f(N_G[v])\le\tilde k_v$. Thus $\tilde k_v>0$, which
        forces $\tilde k_v=k_v-\ell(N[v])$ and therefore $\ell(N[v])<k_v$.
\item If $u_v=\ell_v$, by Observation \ref{rem:kuP} \textit{iv.} and since a Released $(\mathbf{k}, \boldsymbol\ell, \mathbf{u})$-packing function of $G$ exists, it holds $\ell(N[v])\le k_v$.
\end{itemize}
In either case, $\tilde k_v=k_v-\ell(N[v])$ or equivalently   $\tilde k_v+\ell(N[v])=k_v$, and therefore 
\[
  f(N[v])=\tilde f(N[v])+\ell(N[v])\ \le\ \tilde k_v+\ell(N[v])\ =\ k_v .
\]
Hence $f$ is a Released $(k,\boldsymbol\ell,u)$-packing function of $G$. 
\end{proof}

Due to Lemma \ref{sololcero}, in the sequel we deal with instances of RPP  of the form $(G, \mathbf{k}, \mathbf{0}, \mathbf{u})$, for given $G$, $\mathbf{k}$ and $\mathbf{u}$; then, to simplify the notation, we will simply write $(G, \mathbf{k}, \mathbf{u})$.  Also, we will write   $L^R_{\mathbf{k},\mathbf{u}}(G)$ instead of $L^R_{\mathbf{k},\mathbf{0},\mathbf{u}}(G)$ and  $L_{\mathbf{k}, \mathbf{u}}(G)$ instead of $L_{\mathbf{k},\mathbf{0}, \mathbf{u}}(G)$. 
Besides, from \textit{i.} and \textit{v.} in  Observation \ref{rem:kuP}, it sufficies to consider  $u_v \leq k_v \leq u(N_G[v])$ for each $v\in V$. Then, we will say that an instance $(G, \mathbf{k}, \mathbf{u})$ of RPP is \emph{feasible}  when $u_v \leq k_v \leq u(N_G[v])$ for each $v\in V$ and such that there is at least one Released  $(\mathbf{k}, \mathbf{0}, \mathbf{u})$-packing function of $G$.

In the remainder of this section, we study the computational complexity of RPP. 
First, from Remark \ref{rem:0} \textit{i.} and the well-known NP-completeness of the Stable Set Problem\footnote{The Stable Set Problem has a graph and a number as instance and the objective is to decide if the graph has a stable set of size at least the given number.}, follows that 
    RPP is \text{NP}-complete, even with $\mathbf{k}=\mathbf{u}=\mathbf{1}$. Second, from Remark \ref{rem:0} \textit{ii.} and the well-known NP-completeness of the 1-dependent Set problem \cite{Betzler2012}, follows that
    RPP is \text{NP}-complete, even with $\mathbf{k}=2 \cdot \mathbf{1}$ and $\mathbf{u}=\mathbf{1}$.

Inspired by the relationship showed in  Remark \ref{rem:0}, we then study the complexity for general capacities. 

Recall that a \emph{$k$-dependent set} ($k\in \mathbb{Z}_0^+$)  in a graph is a subset of vertices that induces a subgraph\footnote{For $D\subseteq V$, $G[D]$ is the subgraph with vertex set $D$ and all edges of $G$ between the vertices in $D$.}  with maximum degree $k$. 
 The decision problem associated with $k$-dependent sets, the $k$-dependent set problem, has been studied in \cite{Djidjev1992}. In particular, its NP-completeness was shown
even for planar bipartite
graphs and any given $k \geq 1$ \cite{Dessmark1993}. On the positive side,  polynomial time algorithms for the $k$-dependent set problem are presented in \cite{Dessmark1993} for split graphs, cographs, trees and graphs with bounded treewidth. Here, we can state and prove:

\begin{proposition}\label{prop:bounded} 
For  fixed  $\mathbf{k}=k\cdot \mathbf{1}$ with $ k \in \mathbb{Z}^+$   and a graph $G=(V,E)$,
    solving RPP on the instance  $(G, \mathbf{k}, \mathbf{1})$ is equivalent to finding a $(k-1)$-dependent set in $G$  of maximum cardinality.
\end{proposition}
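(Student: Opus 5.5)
With $\mathbf{u}=\mathbf{1}$ and $\boldsymbol\ell=\mathbf{0}$, every Released $(\mathbf{k},\mathbf{1})$-packing function $f$ takes values in $\{0,1\}$. It is therefore the characteristic function of the set $D=\{v\in V: f(v)=1\}$, and its weight is $f(V)=|D|$. I would set up a weight-preserving bijection between Released $(k\cdot\mathbf{1},\mathbf{1})$-packing functions of $G$ and $(k-1)$-dependent sets of $G$, namely $f\mapsto D_f=f^{-1}(1)$ with inverse $D\mapsto \chi_D$. Once this is established, maximizing $f(V)$ is the same as maximizing $|D|$, so $L^R_{\mathbf{k},\mathbf{1}}(G)$ equals the maximum cardinality of a $(k-1)$-dependent set. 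Moreover, an optimal object for one problem translates directly into an optimal object for the other.

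The key step is translating condition ii.\ of Definition \ref{def:Releasedpacking}. For $v$ with $f(v)=u_v=1$, that is $v\in D$, we have
\[ f(N_G[v]) = 1 + |N_G(v)\cap D| = 1 + d_{G[D]}(v). \]
The condition $f(N_G[v])\leq k$ is thus equivalent to $d_{G[D]}(v)\leq k-1$. Condition ii.\ imposes nothing on vertices with $f(v)=0$, i.e.\ on $v\notin D$. Hence $f$ is a Released packing function if and only if every vertex of $G[D]$ has degree at most $k-1$, which is exactly the definition of $D$ being a $(k-1)$-dependent set. Both directions of the equivalence follow from this single chain of equalities. Condition i.\ holds automatically for any $\chi_D$.

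There is little obstruction here. The points needing care are the following. The value $k\geq 1$ is required so that $k-1\geq 0$ makes sense; for $k=1$ we recover Remark \ref{rem:0} \textit{i.}, and for $k=2$ we recover Remark \ref{rem:0} \textit{ii.}, which serves as a sanity check. Feasibility is never an issue, since $f\equiv 0$ (equivalently $D=\emptyset$) is always valid. I would also remark that no normalization via Observation \ref{rem:kuP} is needed, even when $k>u(N_G[v])=d_G(v)+1$ for some vertices: the bijection holds for the raw instance, because the constraint $d_{G[D]}(v)\le k-1$ is then simply vacuous at such vertices. Finally, since the correspondence is computable in linear time in both directions, the two optimization problems are polynomially equivalent. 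This also transfers the NP-completeness results and the polynomial-time algorithms of \cite{Dessmark1993}.
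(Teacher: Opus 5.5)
Your proposal is correct and follows essentially the same route as the paper's proof. Both use the correspondence $f\leftrightarrow D=f^{-1}(1)$ and rest on the identity $f(N_G[v])=d_{G[D]}(v)+1$ for $v\in D$. The only difference is presentation: the paper argues the two directions separately, the forward one by contradiction, while you use a single chain of equivalences.
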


\begin{proof}
   Let $f$ be a Released $(\mathbf{k}, \mathbf{0}, \mathbf{1})$-packing function of  $G$ and consider the set  $D=\{v\in V:f(v)=1\}$ and the induced subgraph $G'=G[D]$ of $G$. Let us see that $D$ is a $(k-1)$-dependent set in $G$. 
Suppose for a contradiction that there exists a vertex $v$ in $G'$ with $d_{G'}(v) \geq k$. From the definition of $G'$ follows that $f(v)=0$ for all $v\notin D$ and $f(N_G[v])=|N_{G'}[v]|=d_{G'}(v)+1 \geq k +1$, contradicting the fact that $f$ is a Released $(\mathbf{k}, \mathbf{0}, \mathbf{1})$-packing function of $G$. In this case, $|D|=f(V)$. 
  
   Conversely, let $D \subseteq V$ be such that the subgraph $G'=G[D]$ has maximum degree $k-1$.  Define the function $f$ on $V$ such that $f(v)=1$ for each vertex in $D$ and $f(v)=0$ otherwise.     
   Take a vertex $v\in V$ with  $f(v)=1$. This implies that $v \in D$  and $v$ has degree at most $k-1$ in $G'$. Thus   $f(N_G[v])= f(N_{G'}[v])= d_{G'}(v)+1 \leq k$, implying that $f$ is a Released $(\mathbf{k}, \mathbf{0}, \mathbf{1})$-packing function of $G$. In this case,  $f(V)=|D|$. \end{proof}

\begin{theorem}\label{vertexdeletion}

 RPP is \text{NP}-complete, even for fixed  $\mathbf{k}=k\cdot \mathbf{1}$ with $ k \in \mathbb{Z}^+$ and  $k\geq 2$, $\mathbf{u}=\mathbf{1}$ and  planar bipartite graphs.
 \end{theorem}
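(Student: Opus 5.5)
The plan is to reduce from the $(k-1)$-dependent set problem and to use Proposition \ref{prop:bounded} as the bridge. First, I would check membership in NP. A certificate is a function $f:V\to\mathbb{Z}_0^+$. Here $\mathbf{u}=\mathbf{1}$, so every value of $f$ is $0$ or $1$ and the certificate has polynomial size. Both conditions of Definition \ref{def:Releasedpacking} can be checked in polynomial time: the bounds $0\leq f(v)\leq 1$, and $f(N_G[v])\leq k$ for each $v$ with $f(v)=1$. The same holds for the inequality $f(V)\geq x$.

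For hardness, fix an integer $k\geq 2$, so that $r=k-1\geq 1$. By \cite{Dessmark1993}, deciding whether a planar bipartite graph $G$ has an $r$-dependent set of size at least $x$ is NP-complete for every fixed $r\geq 1$. Given such an instance $(G,x)$, I would build the RPP instance $(G,k\cdot\mathbf{1},\mathbf{1})$ with the same threshold $x$. The graph is unchanged, so it is still planar bipartite, and the construction is clearly polynomial.

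Correctness would follow from the two directions in the proof of Proposition \ref{prop:bounded}, which give a weight-preserving correspondence. A Released $(k\cdot\mathbf{1},\mathbf{0},\mathbf{1})$-packing function $f$ yields the $(k-1)$-dependent set $D=\{v: f(v)=1\}$ with $|D|=f(V)$. Conversely, a $(k-1)$-dependent set $D$ yields its characteristic function, which is a released packing function of weight $|D|$. Therefore $G$ has a $(k-1)$-dependent set of size at least $x$ if and only if $L^R_{k\cdot\mathbf{1},\mathbf{1}}(G)\geq x$.

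There are a few minor points to check. The instance must satisfy the feasibility convention $u_v\leq k_v\leq u(N_G[v])$. By Observation \ref{rem:kuP} \textit{i.}, reducing $k_v$ to $\min\{k,d_G(v)+1\}$ does not change the set of released functions, so this normalization is harmless. The zero function is always feasible, so a released packing function exists. The restriction $k\geq 2$ is needed only because the cited hardness requires $r\geq 1$. The case $k=1$ is the stable set problem, which is polynomial on bipartite graphs, and that explains the restriction.

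I expect no substantive obstacle. The only point requiring care is matching exactly the graph class and the parameter range of the hardness result in \cite{Dessmark1993}, namely planar bipartite graphs and fixed $r\geq 1$, so that the shift $r=k-1$ covers every $k\geq 2$.
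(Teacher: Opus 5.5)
Your proposal is correct and is essentially the paper's proof: membership in NP, plus the weight-preserving correspondence of Proposition \ref{prop:bounded}, applied to the NP-completeness of the $(k-1)$-dependent set problem on planar bipartite graphs. The normalization remark is unnecessary, and applying it would make $\mathbf{k}$ non-uniform; the instance $(G,k\cdot\mathbf{1},\mathbf{1})$ is already a valid RPP instance as the theorem states it.
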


\begin{proof}
Clearly, for fixed $\mathbf{k}=k\cdot \mathbf{1}$ with $ k \in \mathbb{Z}^+$  and $\mathbf{u}=\mathbf{1}$ and  planar bipartite graphs, RPP is in \text{NP}. 
Then, the result follows from Proposition \ref{prop:bounded} and the NP-completeness of the $(k-1)$-dependent set problem on planar bipartite graphs. 

\end{proof}

 For a fixed graph property $\Pi$, the \emph{Vertex-deletion problem} aims to find the minimum number of  vertices that must be deleted  from a given graph so that the resulting graph  satisfies $\Pi$ \cite{Lewis1980}. When $\Pi$ is the property of having bounded degree, the problem is called Bounded Degree Vertex-deletion problem (see for instance  \cite{Betzler2012}).

\begin{corollary}\label{cor:delete} For  fixed  $\mathbf{k}=k\cdot \mathbf{1}$ with $ k \in \mathbb{Z}^+$ and a graph $G$ with vertex set $V$, $L^R_{\mathbf{k}, \mathbf{1}}(G)=|V|-m_k(G)$, where $m_k(G)$ stands for the minimum number of vertices that must be deleted from $G$ to obtain a graph with maximum degree $k-1$.
 \end{corollary}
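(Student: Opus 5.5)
The plan is to read the corollary off Proposition \ref{prop:bounded}. That proposition gives a weight-preserving correspondence between Released $(\mathbf{k},\mathbf{0},\mathbf{1})$-packing functions of $G$ and $(k-1)$-dependent sets of $G$. Concretely, $f$ corresponds to $D=\{v: f(v)=1\}$, with $f(V)=|D|$, and every $(k-1)$-dependent set arises this way. It follows that $L^R_{\mathbf{k},\mathbf{1}}(G)$ equals the maximum cardinality of a $(k-1)$-dependent set in $G$. The remaining task is to identify this maximum with $|V|-m_k(G)$.

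For that I will use complementation. A set $S\subseteq V$ has the property that $G-S$ has maximum degree at most $k-1$ exactly when $D=V\setminus S$ is a $(k-1)$-dependent set. This holds because $G-S=G[V\setminus S]$, and the degree of a vertex in $G-S$ is its degree in the induced subgraph $G[D]$. The map $S\mapsto V\setminus S$ is therefore a bijection between feasible deletion sets and $(k-1)$-dependent sets, and it satisfies $|D|=|V|-|S|$.

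Maximizing $|D|$ is then the same as minimizing $|S|$. This gives
\[
\max\{|D| : D \text{ is } (k-1)\text{-dependent}\} = |V| - m_k(G),
\]
and combining this with the first step yields $L^R_{\mathbf{k},\mathbf{1}}(G)=|V|-m_k(G)$.

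Both maxima are attained, since the empty set is $(k-1)$-dependent (equivalently, the zero function is a Released packing function). Hence no feasibility issue arises, and there is no real obstacle. The only point to state carefully is that ``maximum degree $k-1$'' is read as ``at most $k-1$'' in both notions, which is how Proposition \ref{prop:bounded} uses it.
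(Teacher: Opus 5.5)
Your proposal is correct and follows the paper's proof, which likewise combines Proposition \ref{prop:bounded} with the duality between $(k-1)$-dependent sets and bounded-degree vertex-deletion sets. You also spell out the complementation bijection $S\mapsto V\setminus S$ with $|D|=|V|-|S|$, which the paper leaves implicit, and you rightly read ``maximum degree $k-1$'' as ``at most $k-1$''.
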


 \begin{proof}
     Follows from the duality between the $(k-1)$-dependent set problem and the Bounded Degree Vertex-deletion problem.
 \end{proof}

    Given a graph $G$ and a vector $\mathbf{k}=(k_v)_{v\in V}$, a \emph{$\mathbf{k}$-dependent} set in $G$  is  a subset of vertices that induces a subgraph  with degree at most $k_v$ in each vertex $v$ \cite{Dessmark1993}.
Under this definition, we observe that a generalization of  Proposition  \ref{prop:bounded} holds: 

\begin{observation}\label{obs:general} 
For any vector $\mathbf{k}$ of capacities, 
    solving RPP on an instance $(G, \mathbf{k}, \mathbf{1})$ is equivalent to finding a $(
    \mathbf{k}-\mathbf{1})$-dependent set in $G$  of maximum cardinality.
\end{observation}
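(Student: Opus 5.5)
The plan is to repeat the argument of Proposition \ref{prop:bounded} verbatim, replacing the constant $k$ by the vertex-dependent capacity $k_v$ everywhere. The central construction is the bijection between $0/1$ functions $f$ on $V$ and subsets $D\subseteq V$, given by $D=\{v\in V: f(v)=1\}$. Since $\mathbf{u}=\mathbf{1}$ and the lower bound is $\mathbf{0}$, condition i. of Definition \ref{def:Releasedpacking} forces every Released $(\mathbf{k},\mathbf{0},\mathbf{1})$-packing function to take values in $\{0,1\}$. Under this correspondence $f(V)=|D|$, so it suffices to show that $f$ is a Released $(\mathbf{k},\mathbf{0},\mathbf{1})$-packing function if and only if $D$ is a $(\mathbf{k}-\mathbf{1})$-dependent set.

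The key identity is that for $v\in D$ we have $f(N_G[v])=|N_G[v]\cap D|=d_{G[D]}(v)+1$. For the forward direction, condition ii. applies exactly to the vertices $v\in D$, that is, those with $f(v)=u_v=1$. It gives $d_{G[D]}(v)+1\leq k_v$, i.e. $d_{G[D]}(v)\leq k_v-1$ for all $v\in D$. This is precisely the requirement for $D$ to be $(\mathbf{k}-\mathbf{1})$-dependent, since that notion only constrains degrees of vertices inside $D$. Conversely, if $D$ is $(\mathbf{k}-\mathbf{1})$-dependent, the same identity shows that the indicator function of $D$ satisfies condition ii. Condition i. holds trivially for an indicator function.

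The only point needing care is a vertex with $k_v=0$, where $\mathbf{k}-\mathbf{1}$ has a negative entry. Here I would read $d_{G[D]}(v)\leq -1$ as meaning $v\notin D$. This matches Observation \ref{rem:kuP} ii., which says such a vertex cannot receive $f(v)=u_v=1$. Alternatively, under the feasibility convention $u_v\leq k_v$, this case does not arise at all. With that convention fixed, optimal objects correspond with equal value, giving $L^R_{\mathbf{k},\mathbf{1}}(G)$ equal to the maximum size of a $(\mathbf{k}-\mathbf{1})$-dependent set. I expect no real obstacle beyond stating this convention.
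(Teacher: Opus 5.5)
Your proposal is correct and follows the paper's intended argument. The paper gives no separate proof and presents the observation as the direct generalization of Proposition \ref{prop:bounded}: the same correspondence $D=\{v\in V: f(v)=1\}$, using $f(N_G[v])=d_{G[D]}(v)+1$ for $v\in D$, with $k$ replaced by $k_v$. Your handling of $k_v=0$ (forcing $v\notin D$, consistent with Observation \ref{rem:kuP} ii., or excluded by the convention $u_v\le k_v$) is a small clarification the paper leaves implicit.
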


It is also known from \cite{Dessmark1993}, that the  $
    \mathbf{k}$-dependent set problem is NP-complete on split graphs for any vector  $
    \mathbf{k}$, implying:
    
 \begin{theorem}
 RPP is \text{NP}-complete even for $\mathbf{u}=\mathbf{1}$ and  split  graphs.
 \end{theorem}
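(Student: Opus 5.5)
The proof will follow the same pattern as Theorem \ref{vertexdeletion}: show membership in NP, then reduce from the $\mathbf{k}$-dependent set problem on split graphs via Observation \ref{obs:general}.

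\textbf{Membership in NP.} Restricted to $\mathbf{u}=\mathbf{1}$, a certificate is a function $f:V\to\{0,1\}$. Checking conditions i.\ and ii.\ of Definition \ref{def:Releasedpacking} and checking $f(V)\geq x$ takes polynomial time. For condition ii.\ we compute $f(N_G[v])$ for each $v$ with $f(v)=1$ and compare it with $k_v$.

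\textbf{Hardness.} We take an instance of the $\mathbf{k}'$-dependent set problem: a split graph $G$, a capacity vector $\mathbf{k}'$, and a target size $x$. By \cite{Dessmark1993} this problem is NP-complete on split graphs. We map it to the RPP instance $(G,\mathbf{k},\mathbf{1})$ with $\mathbf{k}=\mathbf{k}'+\mathbf{1}$ and the same threshold $x$; the graph is unchanged, so it is still split.

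The correspondence $D\leftrightarrow f=\chi_D$ is exactly the argument of Proposition \ref{prop:bounded}, run vertex by vertex as Observation \ref{obs:general} records. For $v\in D$ we have
\[
f(N_G[v])=d_{G[D]}(v)+1,
\]
so $f(N_G[v])\leq k_v$ holds if and only if $d_{G[D]}(v)\leq k'_v$. Hence $G$ has a $\mathbf{k}'$-dependent set of size at least $x$ if and only if $L^R_{\mathbf{k},\mathbf{1}}(G)\geq x$. The map is clearly polynomial.

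\textbf{Main obstacle.} The delicate point is normalization. The paper restricts attention to feasible instances, i.e.\ those with $u_v\leq k_v\leq u(N_G[v])$ for every $v$.

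The lower bound $1\leq k_v$ holds automatically, since $k_v=k'_v+1\geq 1$.

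The upper bound $k_v\leq d_G(v)+1$ may fail. In that case I replace $k_v$ by $\min\{k_v,d_G(v)+1\}$, which Observation \ref{rem:kuP} i.\ shows preserves the set of released packing functions. Correspondingly, any $k'_v\geq d_G(v)$ imposes no constraint on the dependent set either, so the equivalence is unaffected.

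Feasibility in the sense of existence of some function is trivial, because $f\equiv 0$ is a Released packing function. Therefore NP-completeness transfers even to feasible instances.
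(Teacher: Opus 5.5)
Your proposal is correct and follows essentially the paper's route. The paper obtains the theorem from Observation~\ref{obs:general}, which is the vertex-wise form of Proposition~\ref{prop:bounded} (i.e.\ your shift $\mathbf{k}=\mathbf{k}'+\mathbf{1}$), combined with the NP-completeness on split graphs of the dependent set problem with a non-uniform capacity vector from \cite{Dessmark1993}. Your explicit NP-membership argument and the normalization $k_v\mapsto\min\{k_v,d_G(v)+1\}$ via Observation~\ref{rem:kuP}~i.\ are details the paper leaves implicit, and you handle them correctly.
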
   
    
\section{A general ILP model and some polyhedral results}\label{sec:ilpmodel}

In this section,  we first present a compact and general  Integer Linear Programming formulation of RPP.

Consider a feasible instance $(G, \mathbf{k}, \mathbf{u})$ of RPP. 

It is clear that any  integer-valued function $f$ defined on $V$ can be characterized by a  vector $\mathbf{x}\in \Z^{|V|}$
such that $x_v = f(v)$.
For $f$ to be a Released $(\mathbf{k}, \mathbf{0}, \mathbf{u})$-packing function of $G$, the condition
``if $f(v) =u_v$ then  $f(N_G[v])\leq k_v$'' translates to 

\begin{equation}\label{eq1} \text{`` if } \;  x_v =u_v \; \text{ then } \; x( N_G(v)) \leq k_v - u_v\text{''.} 
\end{equation}

We propose the following ILP model for  RPP:

\begin{model}
\label{model1}
\textbf{ILP model for RPP}
\par
\medskip
\noindent Maximize 
\begin{equation}
x(V)=\sum_{v \in V} x_{v}
 \label{eqn:objective}
\end{equation}
subject to
\begin{align}
  \left(u(N_G[v])-k_v \right)x_v + x(N_G(v)) 
 & \leqslant \left(u(N_G[v])-k_v  - 1\right)
    u_v + k_v  && (v\in V,\;   k_v\leq u(N_G[v])-1 ) 
\label{eqn:incoming} \\
0\leq x_{v} & \leq  u_v && (v \in V)\label{eqn:tcerouno}\\
&x_v \in \mathbb{Z} && (v \in V). 
\end{align}
\end{model}

\begin{remark}
    We do not include inequality of type \eqref{eqn:incoming} for a vertex $v$ with $k_v=u(N_G[v])$, since it would reduce to $x(N_G(v))\leq u(N_G(v))$ (already implied by inequalities in \eqref{eqn:tcerouno}).
\end{remark}

If $|V|=n$, Model \ref{model1} has only $n$ variables and at most $3n$ inequalities. Hence, it is compact in the sense that it is polynomial in the size of the input graph, making its linear relaxation suitable for resolution via commercial solvers for medium-scale instances.
\begin{lemma}
Model ~\ref{model1} solves RPP.
\end{lemma}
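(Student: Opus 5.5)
We show that, for a feasible instance $(G,\mathbf{k},\mathbf{u})$, the integer points satisfying \eqref{eqn:incoming}--\eqref{eqn:tcerouno} are exactly the vectors $\mathbf{x}$ with $x_v=f(v)$ for a Released $(\mathbf{k},\mathbf{0},\mathbf{u})$-packing function $f$ of $G$. Since the objective \eqref{eqn:objective} is the weight $f(V)$, this gives the lemma. The constraints \eqref{eqn:tcerouno} together with integrality encode condition i. of Definition \ref{def:Releasedpacking} with $\boldsymbol\ell=\mathbf{0}$. So the whole argument reduces to a vertex-by-vertex check: for fixed $v$ and integer $0\le x_v\le u_v$, inequality \eqref{eqn:incoming} must be equivalent to the implication \eqref{eq1}. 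Write $M_v=u(N_G[v])-k_v$. Feasibility gives $M_v\ge 0$. When $M_v=0$ no inequality is imposed, and the preceding Remark already shows that the implication holds automatically, because $x(N_G[v])\le u(N_G[v])=k_v$.

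The case $M_v\ge 1$ splits according to the value of $x_v$. If $x_v=u_v$, substituting into \eqref{eqn:incoming} gives $M_v u_v + x(N_G(v))\le (M_v-1)u_v+k_v$, which is $x(N_G(v))\le k_v-u_v$. This is exactly the consequent of \eqref{eq1}. So in this case the inequality holds if and only if the released condition holds at $v$.

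If $x_v\le u_v-1$, the implication is vacuous, and we must check that \eqref{eqn:incoming} is redundant. The left-hand side is increasing in $x_v$ since $M_v\ge 1$. Using $x_v\le u_v-1$ and $x(N_G(v))\le u(N_G(v))$ from \eqref{eqn:tcerouno}, the left-hand side is at most
\[
M_v(u_v-1)+u(N_G(v)).
\]
We then compare this with the right-hand side $(M_v-1)u_v+k_v$. Their difference is $u(N_G(v))+u_v-M_v-k_v=u(N_G[v])-M_v-k_v=0$, so the inequality holds. This is where integrality is essential: it turns $x_v<u_v$ into $x_v\le u_v-1$. Getting the coefficients exactly tight here is the main point, and the choice of coefficient $M_v$ is what makes it work.

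Combining the cases, feasible integer solutions of Model \ref{model1} correspond bijectively to Released packing functions, with the same weight. Hence the optimum of Model \ref{model1} equals $L^R_{\mathbf{k},\mathbf{u}}(G)$. By Lemma \ref{sololcero} and Observation \ref{rem:kuP}, restricting to feasible instances with null lower bounds loses no generality.
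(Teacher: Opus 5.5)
Your proposal is correct and follows essentially the same argument as the paper: you rewrite the constraint with $M_v=u(N_G[v])-k_v$, split on $x_v=u_v$ versus $x_v\le u_v-1$, and use $x(N_G(v))\le u(N_G(v))$ to show the constraint is redundant in the second case. You also spell out the case $M_v=0$ (where no inequality is imposed) and the exact cancellation of the slack, both of which the paper leaves implicit.
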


\begin{proof} 
For a given feasible instance $(G, \mathbf{k}, \mathbf{u})$ of RPP and a $\{0,1,\dots,\max_{v\in V}\{u_v\}\}$-valued function $f$ on $V$, let $\mathbf{x}=(x_v)_{v \in V}$ be defined as $x_v = f(v)$ for each $v \in V$. 

Let $M_v = u( N_G[v])  - k_v$. Constraint in (\ref{eqn:incoming}) for $v$ can be rewritten as $M_v x_v + x( N_G(v)) \le (M_v - 1) u_v + k_v$.
If $x_v = u_v$, it reduces to $u_v + x( N_G(v))  \le k_v$, which enforces $f(N_G[v]) \le k_v$.
Otherwise $0 \le x_v \le u_v - 1$ and  then $u_v - x_v \ge 1$. Since $f(N_G(v)) = x(N_G(v)) \le u(N_G(v))$, constraint (4) is satisfied for $v$. 

Thus, a solution $\mathbf{x}$ of Model 4.1 maps bijectively to a Released $(\mathbf{k}, \mathbf{0}, \mathbf{u})$-packing function of $G$. Finally, the objective function (\ref{eqn:objective}) maximizes the total weight.

\end{proof}

\begin{definition}
For a graph $G$ with $|V|=n$, and vectors $\mathbf{k}$ and $\mathbf{u}$, we introduce the polytope
$P(G,\mathbf{k},\mathbf{u})$ as the convex hull of vectors that satisfy all the constraints in Model \ref{model1}. 
\end{definition}

As a first step in the study of $P(G,\mathbf{k},\mathbf{u})$, we characterize the dimension of $P(G,\mathbf{k},\mathbf{u})$  in terms of the number of vertices $v$ with $u_v=0$, assuming $\mathbf{k}\geq \mathbf{1}$.
 For $v\in V$, the vector $\mathbf{e}^{v}$ indicates the \emph{characteristic vector} of $v$, i.e. the vector that has all zero components except for $v$, for which it has a 1. 

A set of vectors $\{v_0, v_1, v_2, \dots, v_k\}$ is \emph{affinely independent} if and only if the resulting set of ``difference vectors'' $\{v_1 - v_0, v_2 - v_0, \dots, v_k - v_0\}$ is linearly independent. In our proofs, we use the following equivalent definition: $\{v_0, v_1, v_2, \dots, v_k\}$ is  affinely independent if the only solution to the equation $\sum_{i=0}^k \lambda_i v_i = 0$ is $\lambda_i = 0$ for all $i$, given that the sum of the coefficients equals zero ($\sum_{i=0}^k \lambda_i = 0$).

\begin{proposition}\label{prop:dimensionofP}
Given a graph $G=(V,E)$ and vectors $\mathbf{u}\geq \mathbf{0}$ and $\mathbf{k}\geq \mathbf{1}$ such that  $u_v\leq k_v\leq u(N_G[v])$  for all $v\in V$, let  $V_0(\mathbf{u})=\{v\in V:\ u_v=0\}$. Then, the dimension of $P(G,\mathbf{k},\mathbf{u})$, 
$dim(P(G,\mathbf{k},\mathbf{u}))$, is equal to $|V|-|V_0(\mathbf{u)}|$.
\end{proposition}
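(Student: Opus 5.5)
Write $V_0=V_0(\mathbf{u})$ and $V_+=V\setminus V_0$. The proof has two parts: an upper bound coming from equalities satisfied by every point of $P(G,\mathbf{k},\mathbf{u})$, and a matching lower bound from explicit affinely independent integer points.

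\emph{Upper bound.} Every feasible $\mathbf{x}$ satisfies $0\le x_v\le u_v$ by \eqref{eqn:tcerouno}. Hence $x_v=0$ for every $v\in V_0$. So $P(G,\mathbf{k},\mathbf{u})$ lies in the affine subspace $\{\mathbf{x}: x_v=0 \ \forall v\in V_0\}$, whose dimension is $|V|-|V_0|$. This gives $\dim P(G,\mathbf{k},\mathbf{u})\le |V|-|V_0|$.

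\emph{Lower bound.} It suffices to exhibit $|V_+|+1$ affinely independent feasible integer points. The natural candidates are $\mathbf{0}$ together with $\mathbf{e}^v$ for each $v\in V_+$.

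First, $\mathbf{0}$ is feasible. In that case no vertex with $u_v>0$ attains $x_v=u_v$. A vertex with $u_v=0$ does attain it, but then $f(N_G[v])=0\le k_v$. Equivalently, one can check \eqref{eqn:incoming} directly: its right-hand side $(M_v-1)u_v+k_v$ is nonnegative, since $M_v\ge 1$ whenever the inequality is present and $k_v\ge 0$.

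Next, fix $v\in V_+$; we check that $\mathbf{e}^v$ is feasible.
\begin{itemize}
\item The bounds \eqref{eqn:tcerouno} hold because $u_v\ge 1$.
\item If $u_v\ge 2$, then $f(v)=1<u_v$, so no capacity condition is triggered at $v$.
\item If $u_v=1$, then $f(v)=u_v$ and we need $f(N_G[v])=1\le k_v$. This holds by the hypothesis $\mathbf{k}\ge\mathbf{1}$, and it is exactly where that hypothesis is used.
\item Any other vertex $w$ that attains $f(w)=u_w$ has either $u_w=0$ or $u_w\ge 1$ with $f(w)=0$.
\begin{itemize}
\item If $u_w=0$ and $w\in N_G(v)$, we need $f(N_G[w])=1\le k_w$, again guaranteed by $\mathbf{k}\ge\mathbf{1}$.
\item Otherwise $f(N_G[w])\le 1\le k_w$ as well.
\end{itemize}
\end{itemize}
By the equivalence in the preceding lemma (Model~\ref{model1} solves RPP), checking the released packing conditions is the same as checking the model's constraints, so $\mathbf{e}^v$ is feasible.

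\emph{Affine independence.} The differences $\mathbf{e}^v-\mathbf{0}$, $v\in V_+$, are distinct unit vectors and hence linearly independent. This gives $|V_+|+1$ affinely independent points in $P(G,\mathbf{k},\mathbf{u})$, so $\dim P(G,\mathbf{k},\mathbf{u})\ge|V_+|$.

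The only delicate step is the feasibility of the $\mathbf{e}^v$ at vertices $w$ with $u_w=0$ adjacent to $v$, or at $v$ itself when $u_v=1$; both cases reduce to $k\ge 1$.
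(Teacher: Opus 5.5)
Your proof is correct and is essentially the paper's. It uses the same upper bound from $x_v=0$ for $v\in V_0(\mathbf{u})$ and the same affinely independent points $\mathbf{0}$ and $\mathbf{e}^{v}$ for $v\notin V_0(\mathbf{u})$. The only difference is that you check feasibility of $\mathbf{e}^{v}$ through the released-packing conditions and the lemma that Model~\ref{model1} solves RPP, whereas the paper compares both sides of \eqref{eqn:incoming} directly.

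One small correction to your commentary: at $v$ itself with $u_v=1$ you already have $k_v\geq u_v=1$, so $\mathbf{k}\geq\mathbf{1}$ is really needed only at neighbours $w$ of $v$ with $u_w=0$.
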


\begin{proof}
    
    Let $P=P(G,\mathbf{k},\mathbf{u})$. It is clear from inequalities \eqref{eqn:tcerouno} that $P\subseteq\{x\in \R^{|V|}: x_v=0\}$ for each $v\in V_0(\mathbf{u)}$. The hyperplanes $\{x\in \R^{|V|}: x_v=0\}$  are linearly independent and thus, $dim(P)\leq |V|-|V_0(\mathbf{u)}|$. 
        It remains to exhibit $|V|-|V_0(\mathbf{u)}|+1$ affinely independent points in $P$.
        Take $v'\in V\setminus V_0(\mathbf{u)}$. We claim that $\mathbf{e}^{v'}$ satisfies inequalities in \eqref{eqn:incoming} for every $v\in V$ with $k_v\leq u(N_G[v])-1$. 
    For $x\in \R^{|V|}$ and $v\in V$, let us denote  $lhs(x,v):=\left(u(N_G[v])-k_v \right)x_v + x(N_G(v))$ and $rhs(v):= \left(u(N_G[v])-k_v-1\right)u_v+k_v$. 
Let $v\in V$ with $k_v\leq u(N_G[v])-1$. Note that $rhs(v)\geq 1$  since  $u_v\geq 0$ and $k_v\geq 1$. This implies  that $\mathbf{e}^{v'}$ satisfies inequality \eqref{eqn:incoming} associated with $v$ when $v'\notin N_G[v]$ (as $lhs(\mathbf{e}^{v'},v)=0$ in this case) and also when  $v'\in N_G(v)$ ($lhs(\mathbf{e}^{v'},v)=1$ in this case).
Finally, $lhs(\mathbf{e}^{v'},v')=u(N_G[v'])-k_{v'} \leq rhs(v')$ since  $1\leq u_{v'}\leq k_{v'}\leq u(N_G[v'])$. 
Moreover, $\mathbf{e}^{v'}$ clearly satisfies all inequalities in \eqref{eqn:tcerouno}, thus $\mathbf{e}^{v'}\in P$. Together with $\mathbf{0}$, we get a set of $|V|-|V_0(\mathbf{u})|+1$ affinely independent points in $P$, as desired, thus concluding the proof.
    \end{proof}

The strength of Model \ref{model1} lies in its validity for representing the vast number of specific problems that arise when the vectors $\mathbf{u}$ and $\mathbf{k}$ are fixed, doing so with a small number of variables and inequalities. As we have seen in the previous section, each of these specific problems can be difficult in its own right; thus, the model is not expected to be tight in every case. Below, we present two results on particular instances  that exemplify this point, demonstrating that  inequality in (7) associated with a vertex $v$ defines a facet of $P(G,\mathbf{k},\mathbf{u})$ only under restrictive hypotheses for $v$.

From now on, we continue our study fixing $\mathbf{u}=\mathbf{1}$. 
In this case,  a vector 
$\mathbf{x}=(x_v)\in \mathbb{Z}^{|V|}$ is in  $P(G,\mathbf{k},\mathbf{1})$ if and only if 
\begin{align}
(d_G(v)+1 - k_v)x_v + x( N_G(v))  \leq d_G(v) &\quad (v\in V\text{ with } k_v\leq d_G(v) ) \label{eqn: vecindad con u=1} \\ 
0\leq x_v \leq 1 \label{eqn:0,1} &\quad (v\in V).\end{align}

Clearly, as a corollary of Proposition \ref{prop:dimensionofP}, we obtain that  $P(G,\mathbf{k},\mathbf{1})$ is full dimensional when $1\leq k_v\leq d_G(v)+1$ for all $v\in V(G)$.

 The following remark establishes a useful characterization of integer points in $P(G,\mathbf{k},\mathbf{1})$:

\begin{remark}\label{rem:V1}
   For a graph $G=(V,E)$, $\mathbf{u}=\mathbf{1}$, $\mathbf{k}\geq \mathbf{1}$ and  $\mathbf{x}\in \{0,1\}^{|V|}$, let $$V_1(\mathbf{x})=\{v\in V:\ x_v=1\}.$$ Then $\mathbf{x}\in P(G,\mathbf{k},\mathbf{1})$ if and only if $|V_1(\mathbf{x})\cap N_G(v)|\leq k_v-1$ for all $v\in V_1(\mathbf{x})$.
\end{remark}

The following result describes the instances where inequality \eqref{eqn: vecindad con u=1} defines a facet of $P(G,\mathbf{k},\mathbf{1})$ with $k_v=1$.

\begin{theorem}
    \label{lem: edge ineq} Consider a graph $G=(V,E)$ and vectors $\mathbf{u}\geq \mathbf{0}$ and $\mathbf{k}\geq \mathbf{1}$ such that  $u_v\leq k_v\leq u(N_G[v])$  for all $v\in V$. Let $v_0\in V$ with $k_{v_0}=1$.
Then inequality in \eqref{eqn: vecindad con u=1} for $v=v_0$ defines a facet of $P(G,\mathbf{k},\mathbf{1})$ if and only if $d_G(v_0)=1$. 

\end{theorem}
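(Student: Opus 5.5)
The plan is to work in the case $\mathbf{u}=\mathbf{1}$, where $P(G,\mathbf{k},\mathbf{1})$ is described by \eqref{eqn: vecindad con u=1} and \eqref{eqn:0,1}, and its integer points are characterized by Remark \ref{rem:V1}. Write $d=d_G(v_0)$. Since the inequality for $v_0$ is only present when $k_{v_0}\leq d$, we have $d\geq 1$. With $k_{v_0}=1$ the inequality reads
\[
d\,x_{v_0}+x(N_G(v_0))\leq d .
\]
By Proposition \ref{prop:dimensionofP}, $P(G,\mathbf{k},\mathbf{1})$ is full dimensional, because $V_0(\mathbf{1})=\emptyset$. So a facet is a face of dimension $|V|-1$: for sufficiency I must exhibit $|V|$ affinely independent integer points of $P$ on the face, and for necessity I must show that the face lies in a second, non-parallel hyperplane.

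\textbf{Sufficiency ($d=1$).} Let $N_G(v_0)=\{w\}$; the inequality is then $x_{v_0}+x_w\leq 1$. I will use the points
\[
\mathbf{e}^{v_0},\qquad \mathbf{e}^{w},\qquad \mathbf{e}^{v_0}+\mathbf{e}^{z}\quad (z\in V\setminus\{v_0,w\}).
\]
All of them satisfy the inequality with equality. Each singleton is feasible, since Remark \ref{rem:V1} only needs $k\geq 1$. For $\mathbf{e}^{v_0}+\mathbf{e}^{z}$ the two vertices are nonadjacent, because $v_0$'s only neighbour is $w$, so again every vertex of $V_1$ has no neighbour in $V_1$. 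These $|V|$ vectors are linearly independent: the $\mathbf{e}^{z}$ components are triangular with respect to $\mathbf{e}^{v_0}$, and $\mathbf{e}^w$ is separate. Linear independence gives affine independence, which yields a face of dimension $|V|-1$, i.e.\ a facet. I would choose $\mathbf{e}^{v_0}+\mathbf{e}^z$ rather than $\mathbf{e}^w+\mathbf{e}^z$ precisely because $z$ may be adjacent to $w$ with $k_w=1$.

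\textbf{Necessity ($d\geq 2$).} Let $\mathbf{x}\in P\cap\mathbb{Z}^{|V|}$ satisfy the inequality with equality. If $x_{v_0}=1$, then $x(N_G(v_0))=0$. If $x_{v_0}=0$, then $x(N_G(v_0))=d$, so every neighbour equals $1$. In both cases all neighbours of $v_0$ take the same value. Pick two distinct neighbours $w_1,w_2$, which exist since $d\geq2$. Every integer point of the face then satisfies $x_{w_1}-x_{w_2}=0$. Since $P$ is the convex hull of its integer points, the whole face lies in this hyperplane as well as in the defining hyperplane.

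The two hyperplanes are not multiples of each other, because the defining inequality has coefficient $d\neq0$ on $x_{v_0}$ while the second has coefficient $0$ there. Hence the face has dimension at most $|V|-2$ and is not a facet.

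The only delicate point is feasibility of the points in the sufficiency part, which is handled by the choice of $\mathbf{e}^{v_0}+\mathbf{e}^z$; the rest is routine.
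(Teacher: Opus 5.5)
Your proof is correct. The sufficiency part is exactly the paper's argument, with the same $|V|$ points $\mathbf{e}^{v_0}$, $\mathbf{e}^{w}$ and $\mathbf{e}^{v_0}+\mathbf{e}^{z}$, and the same feasibility check via Remark~\ref{rem:V1}.

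For necessity you take a slightly different route. The paper first shows, via Remark~\ref{rem:V1}, that each edge inequality $x_{v_0}+x_w\leq 1$ with $w\in N_G(v_0)$ is valid. It then observes that for $d_G(v_0)\geq 2$ the inequality for $v_0$ is the sum of at least two such valid inequalities, so it cannot be facet-defining. You instead find an explicit second equation, $x_{w_1}=x_{w_2}$, that holds on the whole face, and conclude by counting dimensions. You obtain that equation using only the inequality itself together with integrality and $0\leq x\leq 1$.

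Both arguments rest on the same fact: on the face, every summand $x_{v_0}+x_w\leq 1$ is tight, so all neighbours of $v_0$ take the value $1-x_{v_0}$. Your version is a bit more self-contained. The paper's ``sum of valid inequalities'' step tacitly uses full-dimensionality and the fact that the summands are not positive multiples of the original inequality. Your explicit non-parallel hyperplane makes the drop in dimension visible directly.
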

\begin{proof} 
First, we claim that   $x_{v_0}+x_w\leq 1$ is a valid inequality for $P(G,\mathbf{k},\mathbf{1})$ for every $w\in N_G(v_0)$. Let $w\in N_G(v_0)$. 

From Remark \ref{rem:V1}, for $\mathbf x \in P\cap\{0,1\}^{|V|}$ with $\mathbf x_{v_0}=1$, we have that $|V_1(\mathbf x)\cap N(v_0)|\le k_{v_0}-1=0$, hence $x_w=0$.

Now suppose  $d_G(v_0)=1$ and  let $w$ be the only neighbor of $v_0$. For $x_{v_0} +  x_w \leq 1$ to be facet-defining, we have to prove  that there are $|V|$ affinely independent points in $P(G,\mathbf{k},\mathbf{1})$ that satisfy $x_{v_0} +  x_w \leq 1$ by equality. 
For this, let us consider the following points. For each $u \in V\setminus\{v_0,w\}$, 
let $\mathbf{x}^u\in \{0,1\}^{|V|}$  be such that $\mathbf{x}^u=\mathbf{e}^{v_0}+\mathbf{e}^u$. In addition, take $\mathbf{x}^{v_0}=\mathbf{e}^{v_0}$ and $\mathbf{x}^{w}=\mathbf{e}^{w}$. Noting that $V_1(\mathbf{x}^u)= \{v_0,u\}$, $V_1(\mathbf{x}^{v_0})=\{v_0\}$ and  $V_1(\mathbf{x}^{w})=\{w\}$, it follows from Remark \ref{rem:V1} that these $|V|$ points belong to $P(G,\mathbf{k},\mathbf{1})$. And moreover, it is not difficult to prove that they are affinely independent and each one satisfies $x_{v_0} + x_w=1$.

Conversely, suppose  inequality in (\ref{eqn: vecindad con u=1}) with $v=v_0$
defines a facet of $P(G,\mathbf{k},\mathbf{1})$ and that  $d_G(v_0) \geq 2$. From our claim, inequality $x_{v_0}+x_w\leq 1$ is valid\footnote{An inequality is valid for a set if it holds true for every single point inside that set.} for all $w\in N_G(v_0)$. Since inequality in (\ref{eqn: vecindad con u=1}) is obtained as the sum of $d_G(v_0)$ ---i.e of at least two--- valid inequalities, it does not define a facet of $P(G,\mathbf{k},\mathbf{1})$, a contradiction.
\end{proof}

From Remark \ref{rem:0}, we know that  $P(G, \mathbf{1}, \mathbf{1})$ corresponds to the well-studied Stable Set polytope $\text{STAB}(G)$ (the convex hull of all stable sets in $G$).
As seen in the previous proof, when $k_v=1$, inequality in \eqref{eqn: vecindad con u=1} is equal to the sum of edge inequalities regarding the vertex $v$, which are valid for $P(G,\mathbf{1},\mathbf{1})$. Consequentely,  if $ \tilde{P}(G,\mathbf{k},\mathbf{u}) $  denotes the polytope defined by inequalities (\ref{eqn: vecindad con u=1}) and \eqref{eqn:0,1} and $\text{ESTAB}(G)=\{x\in [0,1]^{|V|}:\ x_v+x_w\leq 1,\ vw\in E \}$ is the well-known \emph{edge relaxation} of $\text{STAB}(G)$, it follows that $\text{ESTAB}(G)\subseteq \tilde{P}(G,\mathbf{1},\mathbf{1})$.
To the best of our knowledge, $\text{ESTAB(G)}$ contains every other established relaxation of $\text{STAB(G)}$, making $\tilde{P}(G,\mathbf{1},\mathbf{1})$ a rare instance of a weaker relaxation of $\text{STAB(G)}$. While this implies a theoretical disadvantage in terms of tightness, $\tilde{P}(G,\mathbf{1},\mathbf{1})$
offers a more compact formulation that could deliver faster linear programming bounds in a Branch and Bound scheme.
For a graph $G=(V,E)$ with $|V|=n$ and $|E|=m$, $\tilde{P}(G,\mathbf{1},\mathbf{1})$ is defined by at most $3n$ inequalities, whereas $\text{ESTAB}(G)$ requires $2n+m$ inequalities. Thus, optimizing a linear function over $\tilde{P}(G,\mathbf{1},\mathbf{1})$ could prove computationally advantageous for large dense graphs.

\bigskip

We now study the case $\mathbf{k}=2\cdot \mathbf{1}$, and find  necessary and sufficient conditions on the input graph so as inequality in (\ref{eqn: vecindad con u=1})  is facet-defining for $P(G,2\cdot \mathbf{1},\mathbf{1})$.

\begin{theorem}\label{th:4.7}
    Given a graph $G=(V,E)$ and  $v\in V$ with $d_G(v)\geq 2$, inequality in \eqref{eqn: vecindad con u=1} for $v$ defines a facet of $P(G,2\cdot\mathbf{1},\mathbf{1})$ if and only if the following conditions hold:
    \begin{enumerate}
        \item[i.] $\Delta(G[N_G(v)])\leq 1$, and
        \item[ii.] $N_G(v)\setminus N_G(v')\neq \emptyset$, for all $v'\in V\setminus N_G[v]$.
    \end{enumerate}

\end{theorem}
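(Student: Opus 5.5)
The plan is the standard two-sided facet argument. We work with the $\{0,1\}$ characterization of Remark~\ref{rem:V1}, where a $0/1$ vector lies in $P(G,2\cdot\mathbf{1},\mathbf{1})$ if and only if $V_1(\mathbf{x})$ induces a subgraph of maximum degree at most one. Write $d=d_G(v)\geq 2$; the inequality for $v$ is $(d-1)x_v+x(N_G(v))\leq d$.

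First I classify the integer points where it is tight. If $x_v=1$, then $v$ has at most one selected neighbour, so tightness forces exactly one $w\in N_G(v)$ with $x_w=1$. If $x_v=0$, tightness forces $x_w=1$ for every $w\in N_G(v)$. All later steps use only these two types of tight points.

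For necessity I argue by contrapositive in both conditions.
\begin{itemize}
\item[i.] If $\Delta(G[N_G(v)])\geq 2$, then selecting all of $N_G(v)$ is infeasible, so every tight point has $x_v=1$. The face is then contained in $\{x_v=1\}$ as well as in the hyperplane of the inequality. These two hyperplanes are distinct because $d\geq 2$ gives the inequality nonzero coefficients on $N_G(v)$. Hence the face has dimension at most $|V|-2$, while $P$ is full dimensional by Proposition~\ref{prop:dimensionofP}; so it is not a facet.
\item[ii.] If some $v'\notin N_G[v]$ satisfies $N_G(v)\subseteq N_G(v')$, I show every tight point has $x_{v'}=0$. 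In the case $x_v=0$, all $d\geq 2$ neighbours of $v$ are selected and all of them are neighbours of $v'$, so $v'$ cannot be selected. In the case $x_v=1$ with the unique selected neighbour $w$, selecting $v'$ would give $w$ the two selected neighbours $v$ and $v'$, which is forbidden. So the face lies in $\{x_{v'}=0\}$ and again is not a facet.
\end{itemize}

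For sufficiency I exhibit $|V|$ tight points and show they are linearly independent, which implies affine independence.
\begin{itemize}
\item For each $w\in N_G(v)$, take $\mathbf{e}^v+\mathbf{e}^w$. This gives $d$ points.
\item Take the characteristic vector $\chi^{N_G(v)}$. It is feasible precisely by condition i, since $v$ itself is unselected.
\item For each $v'\notin N_G[v]$, use condition ii to pick $w(v')\in N_G(v)\setminus N_G(v')$, and take $\mathbf{e}^v+\mathbf{e}^{w(v')}+\mathbf{e}^{v'}$. The selected set $\{v,w(v'),v'\}$ has only the edge $vw(v')$, so the point is feasible and tight.
\end{itemize}
This gives $d+1+(|V|-d-1)=|V|$ points.

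For independence, take a vanishing linear combination of the points. Each coordinate $v'\notin N_G[v]$ appears only in its own point, so those coefficients are zero. What remains is $\sum_w a_w(\mathbf{e}^v+\mathbf{e}^w)+b\,\chi^{N_G(v)}=0$. The $w$-coordinates give $a_w=-b$ for every $w$. The $v$-coordinate gives $\sum_w a_w=0$, so $-db=0$, and hence $b=0$ and all $a_w=0$.

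The step I expect to need the most care is the tightness classification. Everything hinges on it, in particular the observation that in case ii the vertex $v'$ is blocked through the unique selected neighbour $w$ rather than through $v$ itself. Once that is settled, the point construction is routine.
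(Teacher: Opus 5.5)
Your proposal is correct and follows essentially the same route as the paper. You use the same classification of tight $0/1$ points, the same containments $\mathcal{F}\subseteq\{x_v=1\}$ and $\mathcal{F}\subseteq\{x_{v'}=0\}$ for necessity, and the same $|V|$ points $\mathbf{e}^v+\mathbf{e}^w$, $\chi^{N_G(v)}$ and $\mathbf{e}^v+\mathbf{e}^{w(v')}+\mathbf{e}^{v'}$ for sufficiency. The only differences are cosmetic: you check linear (hence affine) independence instead of affine independence directly, and you state explicitly that the face's hyperplane differs from $\{x_v=1\}$, which the paper leaves implicit.
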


\begin{proof}
    To simplify notation, we denote $P=P(G,2\cdot\mathbf{1},\mathbf{1})$. Let $\mathcal{F}$ be the face of $P$ defined by  inequality in \eqref{eqn: vecindad con u=1}, i.e., $\mathcal{F}=P\cap H$, where:
    \[H=\{\mathbf{x}:\ (d_G(v)-1)x_v +x(N_G(v)) = d_G(v) \}.\]

    Suppose $\mathcal{F}$ is a facet of $P$. As $P$ is a full dimensional polytope by Proposition \ref{prop:dimensionofP}, $dim(\mathcal{F})=|V|-1$. This implies that $\mathcal{F}$ is not contained in any hyperplane $H_{w,b}:=\{\mathbf{x}: x_w=b\}$, for $w\in V$ and $b\in\{0,1\}$, since otherwise $dim(\mathcal{F})\leq |V|-2$.
    
    We first prove the necessity of condition \textit{i.} By contradiction, assume $\Delta(G[N(v)])\geq 2$. Then there exist  $w_1,w_2,w_3\in N(v)$ satisfying 
    $w_1w_2,w_1w_3\in E$. Let $\mathbf{x}\in H\cap \{0,1\}^{|V|}$ such that $x_v=0$. Then $x_w=1$ for all $w\in N_G(v)$. In particular, $w_1\in V_1(\mathbf{x})$ and $\{w_2,w_3\}\subseteq V_1(\mathbf{x})\cap N_G(w_1)$. By Remark~\ref{rem:V1}, $\mathbf{x}\notin P$. Then, $x_v=1$ for all  $\mathbf{x}\in \mathcal{F}$ and thus $\mathcal{F}\subseteq H_{v,1}$, contradicting the fact that $\mathcal{F}$ is a facet. Hence, $\Delta(G[N_G(v)])\leq 1$.

    To prove the necessity of condition \textit{ii.} suppose, by contradiction, that there is a vertex $v'\in V\setminus N_G[v]$ such that $N_G(v)\subseteq N_G(v')$ and let $\mathbf{x}\in H\cap \{0,1\}^{|V|}$ with $x_{v'}=1$. If $x_v=1$, then there is (exactly) one vertex $w\in N_G(v)$ such that $x_w=1$, and then we have $\{v,v'\}\subseteq V_1(\mathbf{x})\cap N_G(w)$. Else ($x_v=0$) we have $x_w=1$ for all $w\in N_G(v)$. This implies $|V_1(\mathbf{x})\cap N_G(v')|\geq 2$, as $d_G(v)\geq 2$ and $N_G(v)\subseteq N_G(v')$. In either case, Remark~\ref{rem:V1} implies $\mathbf{x}\notin P$. Hence, $\mathcal{F}\subseteq H_{v', 0}$, contradicting the fact that $\mathcal{F}$ is a facet of $P$. Therefore, $N_G(v)\setminus N_G(v')\neq \emptyset$, for all $v'\in V\setminus N_G[v]$.

    Finally, we prove that $\mathcal{F}$ is a facet if conditions \textit{i.} and \textit{ii.} hold. 
    On the one hand, \textit{ii.} implies that for each $v'\in V\setminus N_G[v]$ there exists  $w'\in N_G(v)\setminus N_G(v')$. Consider the points $\mathbf{x}^{v'}\in\{0,1\}^{|V|}$ given by $\mathbf{x}^{v'}=\mathbf{e}^v+\mathbf{e}^{w'}+\mathbf{e}^{v'}$, for $v'\in V\setminus N_G[v]$, together with the points $\mathbf{x}^v=\sum_{w\in N_G(v)}\mathbf{e}^w$ and $\mathbf{x}^{w}=\mathbf{e}^v+\mathbf{e}^w$, for each $w\in N_G(v)$. To prove  that these $|V|$ points are affinely independent points we proceed as follows. Suppose $\sum_i\lambda_i \mathbf{x}^{i}= \mathbf{0}$ with
$\sum_i\lambda_i=0$, where the points are $\mathbf{x}^{v}$, $\{\mathbf{x}^{w}\}_{w\in N(v)}$ and
$\{\mathbf{x}^{v'}\}_{v'\in V\setminus N[v]}$. For each $v'\in V\setminus N_G[v]$, component
$v'$ equals $1$ only in $\mathbf{x}^{v'}$, then $\lambda_{v'}=0$. Besides, component $v$ is $0$ in
$\mathbf{x}^{v}$ and $1$ in every $\mathbf{x}^{w}$, hence $\sum_{w\in N_G(v)}\lambda_{w}=0$, and then
$\lambda_v=0$. Finally, for each $w\in N_G(v)$, component $w$ is $1$ only in
$\mathbf{x}^{w}$ among the remaining points, so $\lambda_w=0$. Thus all coefficients
vanish and the points exhibited are affinely independent. Clearly these points belong to $H$, and from Remark~\ref{rem:V1} and \textit{i.}, they all belong to $P$. Hence, $\mathcal{F}$ is a facet of $P$.

\end{proof}

\section{Final remarks and open questions}
We introduced Released packing functions as a new variant of packings in graphs that
relaxes the capacity constraint at selected vertex neighbourhoods. This viewpoint
extends $k$-dependent sets to an arbitrary capacity vector $k=(k_v)_{v\in V}$. Released packing functions coincide with
well-studied notions in graphs  such as stable sets and dependent sets.

 Future reseach may explore  conditions on the input instance that show that the ``released'' version of the  packing number does not exceed  the ``nonreleased'' one. For instance,  inequality (\ref{eq:eq}) in Section \ref{sec:def} is tight for 
  $\{claw, paw \}$-free  graphs  and $\mathbf{k}=2 \cdot \mathbf{1}$. A \emph{ claw} is a graph  on four vertices with three vertices of degree one and a fourth vertex that is adjacent to each of these three and a \emph{paw} is a connected graph on four vertices, that is a triangle together with a vertex of degree one.
Examples of graphs that are $\{claw, paw \}$-free are cycles and paths. Nevertheless, for the graphs pictured below  and  $\mathbf{k}=2\cdot \mathbf{1}$, it holds   $L_{\mathbf{k}, \mathbf{1}}(G) = L^R_{\mathbf{k}, \mathbf{1}}(G)$ (Corollary \ref{cor:delete} can also be applied to find the numbers $L^R_{\mathbf{k}, \mathbf{1} }(G)$); however, the first graph is $\{ claw\}$-free but not $\{ paw\}$-free, the second one is $\{ paw\}$-free but not $\{ claw\}$-free, and the last one is neither $\{ paw\}$- nor $\{ claw\}$-free.

\begin{figure}[h]
\centering
\includegraphics[scale=0.8]{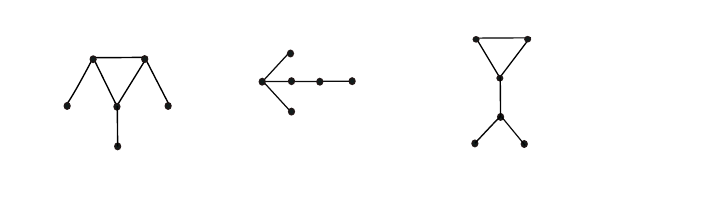}     \label{fig:fig2}
\end{figure}

Regarding the polyhedral approach to the problem, further research is needed to describe new valid inequalities for $P(G,\mathbf{k},\mathbf{u})$ that tighten the given formulation or even define facets of the polytope, with the ultimate goal of incorporating them into a Branch-and-Cut algorithm. Characterizing new facets for general (not necessarily uniform) vectors $\mathbf{u}$ and $\mathbf{k}$ emerges as a challenging task due to the complex interplay of these instance parameters, which leads to a vast number of special cases. For instance, in our preliminary attempts to generalize the conditions under which inequality in \eqref{eqn: vecindad con u=1} is facet-defining even for $\mathbf{u}=\mathbf{1}$, we found that the necessary conditions become highly restrictive and strongly dependent on the individual values of $k_w$ for each $w \in V(G)$, yet these still failed to provide sufficient conditions.

\paragraph{Acknowledgments}

This work is supported in part by  Consejo Nacional de Invesigaciones Científicas y Técnicas through project PIP 0227, and by the Universidad Nacional de Rosario through project 80020190100039UR.

\section{ Declaration of generative AI and AI-assisted technologies in the manuscript preparation process}

During the preparation of this work the authors used Google Gemini 3.1 Pro and Claude Sonnet 4.6 in order to  improve the readability and language of the manuscript and to help identify typographical errors and inconsistencies in notation. After using this tool/service, the authors reviewed and edited the content as needed and take full responsibility for the content of the published article.

\bibliographystyle{elsarticle-num} 
\bibliography{biblioarXiv} 

@article{GGHR2010,
    author  = {R. Gallant and G. Gunther and B. Hartnell and  D. Rall},
    title   ={Limited Packings in graphs},
    journal = {Discrete Applied Mathematics},
    volume  = {158},
    number  = {12},
    pages   = {1357--1364},
    year    = {2010},
    doi     = {https://doi.org/10.1016/j.dam.2009.04.014,}
    
    }

@article{Dessmark1993,
    author  = {A. Dessmark and K. Jansen and A. Lingas},
title = {The maximum k-dependent and f-dependent set problem. },
journal = { Algorithms and Computation. ISAAC 1993. Lecture Notes in Computer Science, vol 762. Springer, Berlin, Heidelberg. },
year = {1993},
URL ={https://doi.org/10.1007/3-540-57568-5\_238},
}

@article{HL2014,
    author  = { E. Hinrichsen and V. Leoni},
    title   = {$\{k\}$-packing Functions of Graphs},
    journal = {Lecture Notes in Computer Science},
    volume  = {8596},
    number  = {},
    pages   = {325--335},
    year    = {2014},
    
    }

@article{Djidjev1992,
    author  = {H. Djidjev, O. Garrido, C. Levcopoulos and A. Lingas},
    title   = {On the maximum k-dependent set problem}, 
    journal = {Technical Report, LU-CS-TR:92-91, Dept. Computer Science, Lund University, Sweden},
    volume = {},
    pages   = {},
    year    = {1992}
}

@article{HNV2023,
author   = {E. Hinrichsen and G. Nasini and N. Vansteenkiste},
title    = {On general packing functions in graphs},
 journal  = {Procedia Computer Science},
 volume   = {223},
  number   = {},
 year     = {2023},
 pages    = {367--369},
 doi      = {https://doi.org/10.1016/j.procs.2023.08.249}

}

@article{Lewis1980,
author   = {J. M. Lewis and M. Yannakakis},
title    = {The node-deletion problem for hereditary properties is NP-Complete},
 journal  = {J. Comput. System Sci.},
 volume   = {20},
  number   = {2},
 year     = {1980},
 pages    = {219–230},
 doi      = {https://doi.org/10.1016/0022-0000(80)90060-4}
 
}

@article{Betzler2012,
author   = {N. Betzler and R. Bredereck and  R. Niedermeier and J. Uhlmann},
title    = {On Bounded-Degree Vertex Deletion parameterized by treewidth},
 journal  = {Discrete Applied Mathematics},
 volume   = {160},
  number   = {1-2},
 year     = {2012},
 pages    = {53-60},
 doi      = {https://doi.org/10.1016/j.dam.2011.08.013}
 
}

@article{DHL2017,
author   = {M.P. Dobson and  E. Hinrichsen and V. Leoni},
title    = {On the complexity of the {k}-packing function problem},
 journal  = {International Transactions in  Operational Research},
 volume   = {24},
  number   = {},
 year     = {2017},
 pages    = {347–354},
 doi      ={http://doi.org/10.1111/itor.12276}
 
}

@article{HLS2020,
author   = {E. Hinrichsen and V. Leoni and M. Safe},
title    = {Labelled packing functions in graphs},
 journal  =  {Information Processing Letters},
 volume   = {154},
  number   = {},
 year     = {2020},
 pages    = {},
 doi      = {https://doi.org/10.1016/j.ipl.2019.105863}
 
}

\end{document}